\begin{document}

\newtheorem{thm}{Theorem}[section]
\newtheorem{lem}[thm]{Lemma}
\newtheorem{prop}[thm]{Proposition}
\newtheorem{cor}[thm]{Corollary}
\newtheorem{con}[thm]{Conjecture}
\newtheorem{claim}[thm]{Claim}
\newtheorem{exam}[thm]{Example}
\newtheorem{defn}[thm]{Definition}
\newtheorem{cons}[thm]{Construction}
\newcommand{\di}{\displaystyle}
\def\dfc{\mathrm{def}}
\def\cF{{\cal F}}
\def\cH{{\cal H}}
\def\cT{{\cal T}}
\def\cM{{\cal M}}
\def\cA{{\cal A}}
\def\cB{{\cal B}}
\def\cG{{\cal G}}
\def\ap{\alpha'}
\def\af{\alpha'_*}
\def\df{\dfc_*}
\def\Frk{F_k^{2r+1}}
\def\nul{\varnothing} 
\def\st{\colon\,}   
\def\MAP#1#2#3{#1\colon\,#2\to#3}
\def\VEC#1#2#3{#1_{#2},\ldots,#1_{#3}}
\def\VECOP#1#2#3#4{#1_{#2}#4\cdots #4 #1_{#3}}
\def\SE#1#2#3{\sum_{#1=#2}^{#3}}  \def\SGE#1#2{\sum_{#1\ge#2}}
\def\PE#1#2#3{\prod_{#1=#2}^{#3}} \def\PGE#1#2{\prod_{#1\ge#2}}
\def\UE#1#2#3{\bigcup_{#1=#2}^{#3}}
\def\FR#1#2{\frac{#1}{#2}}
\def\FL#1{\left\lfloor{#1}\right\rfloor} 
\def\CL#1{\left\lceil{#1}\right\rceil}  
\def\L{{\lambda_1}}

\title{Spectral radius and fractional matchings in graphs}
\author{Suil O\thanks{Department of Mathematics, Simon Fraser University, Burnaby, BC, V5A 1S6, osuilo@sfu.ca. Research supported in part by an NSERC Grant of Bojan Mohar.}
}

\maketitle

\begin{abstract}
A {\it fractional matching} of a graph $G$ is a function $f$ giving each edge a
number in $[0,1]$ so that $\sum_{e \in \Gamma(v)} f(e) \le 1$ for each
$v\in V(G)$, where $\Gamma(v)$ is the set of edges incident to $v$.  The
{\it fractional matching number} of $G$, written $\af(G)$, is the maximum of
$\sum_{e \in E(G)} f(e)$ over all fractional matchings $f$. 
Let $G$ be an $n$-vertex connected graph with minimum degree $d$,
let $\lambda_1(G)$ be the largest eigenvalue of $G$,
and let $k$ be a positive integer less than $n$.
In this paper, we prove that if $\lambda_1(G) < d\sqrt{1+\frac{2k}{n-k}}$, then 
$\af(G) > \FR{n-k}{2}$. As a result, we prove $\af(G) \ge \frac{nd^2}{\lambda_1(G)^2 + d^2}$; 
we characterize when equality holds in the bound.
\end{abstract}

\section {Introduction}
Throughout this paper,  all graphs are simple with no multiple edges and no loops.
A {\it matching} of a graph $G$ is a set of disjoint edges. 
The matching number of $G$, written $\ap(G)$, is the maximum size of a matching in $G$.
The {\it adjacency matrix} $A(G)$ of $G$ is the $n$-by-$n$ matrix 
in which entry $a_{i,j}$ is 1 or 0 according to whether $v_i$ and $v_j$ are adjacent or not,
where $V(G)=\{v_1,\ldots,v_n\}$.
The {\it eigenvalues} of $G$ are the eigenvalues of its adjacency matrix $A(G)$.
Let $\lambda_1(G), \ldots,\lambda_n(G)$ be its eigenvalues in nonincreasing order.
We call $\lambda_1(G)$ the {\it spectral radius of $G$}.
In 2005, Brouwer and Haemers gave a sufficient condition on $\lambda_3(G)$ in an $n$-vertex connected $k$-regular graph $G$
for the existence of a perfect matching in $G$.
In fact, they proved that if $G$ is $k$-regular and  has no perfect matching, 
then $G$ has at least three proper induced subgraphs $H_1, H_2,$ and $H_3$, which are contained in the family $\cal H$ of graphs, 
where ${\cal H}=\{H\!: |V(H)| \text{~is odd, and~}  k|V(H)|-k+2 \le 2|E(H)| \le k|V(H)|-1\}$.
Note that if $H \in \cal H$, then $H$ is not $k$-regular but may be an induced subgraph of a $k$-regular graph. 
Also, if $k$ is even, then the upper bound on $2|E(H)|$ can be replaced by $k|V(H)|-2$, since even regular graphs cannot have a cut-edge,
where an edge $e$ in a connected graph $G$ is a {\it cut-edge} of $G$ if $G-e$ is disconnected.
By using the Interlacing Theorem~(\cite{BH,GR}, Lemma 1.6 \cite{CO}) and the fact that the spectral radius of a graph is at least its average degree, 
they proved that if $G$ is $k$-regular and has no perfect matching, then $\lambda_3(G) \ge \min_{i \in \{1,2,3\}}\lambda_1(H_i) > \min_{H \in \cal H} 2|E(H)|/|V(H)|$, where $H_i \in {\cal H}$.
The bound on $\lambda_3$ that Brouwer and Haemers found could be improved,
since equality in the bound on the spectral radius in terms of the average degree holds only when graphs are regular.
Later, Cioab{\v a}, Gregory, and Haemers~\cite{CGH}
found the minimum of $\lambda_1(H)$ over all graphs $H \in \cal H$. More generally, Cioab{\v a} and the author~\cite{CO} determined connections
between the eigenvalues of an $l$-edge-connected $k$-regular graph and its matching number when $1\le l \le k-2$.

As we mentioned above, the conditions on eigenvalues in the papers actually come from having determined the largest eigenvalue of a proper subgraph in the whole graph. Thus, the results may be improved.
In this paper, we determine connections between the spectral radius of an $n$-vertex connected graph with minimum degree $d$
and its fractional matching number. This connection comes from the whole graph and is best possible.
A {\it fractional matching} of a graph $G$ is a function $f$ giving each edge a
number in $[0,1]$ so that $\sum_{e \in \Gamma(v)} f(e) \le 1$ for each
$v\in V(G)$, where $\Gamma(v)$ is the set of edges incident to $v$.  The
{\it fractional matching number} of $G$, written $\af(G)$, is the maximum of
$\sum_{e \in E(G)} f(e)$ over all fractional matchings $f$. 
In Section 3, we prove that if $G$ is an $n$-vertex graph with minimum degree $d$ and $\af(G) \le \frac{n-k}2$,
then $\lambda_1(G) \ge  d\sqrt{1+\frac{2k}{n-k}}$. This result is best possible in the sense that there are graphs $H$ with minimum degree $d$
such that $\af(H)=\frac{n-k}2$ and $\lambda_1(H)=d\sqrt{1+\frac{2k}{n-k}}$, which is shown in Section 2.

\section {Construction}
In this section, 
we construct $n$-vertex connected graphs $H$ with minimum degree $d$ such that  $\af(H)=\frac{n-k}2$, 
and $\lambda_1(H)=d\sqrt{1+\frac{2k}{n-k}}$.

\begin{cons}\label{con} {\rm
Let $d$ and $k$  be positive integers. Let ${\cal H}(d,k)$ be the family of connected bipartite graphs $H$ with bipartition $A$ and $B$ such that: 

(i) every vertex in $A$ has degree $d$, 

(ii) $|A| = |B|+k$, and  

(iii) the degrees of vertices in $B$ are equal.}
\end{cons}

The condition (iii) of Construction~\ref{con} guarantees that the degree of vertices in $B$ is greater than $d$ because $|A| > |B|$.
Note that ${\cal H}(d,k)$ contains the complete bipartite graph $K_{d,d+k}$. In fact, there are more graphs in the family ${\cal H}(d,k)$.

\begin{exam}\label{exam} {\rm
Let $k$ be a positive integer. 
For $1 \le i \le k$, let $H_i$ be a copy of $K_{d,d+1}$ with partite sets $X_i$ and $Y_i$ of sizes $d$ and $d+1$. Delete one edge in $H_i$.
Restore the original vertex degrees by adding one edge joining $Y_i$ and $X_{i+1}$ for each $i$, with subscript taken modulo $k$.
The resulting graph is $G_k$.}
\end{exam}

Let $A=\bigcup Y_i$ and $B=\bigcup X_i$. By the construction of $G_k$, it is connected and bipartite with the partition $A$ and $B$
such that (i) every vertex in $A$ has degree $d$, (ii) $|A|=|B|+k$, and (iii) every vertex in $B$ has degree $d+1$.
Thus $G_k$ is included in ${\cal H}(d,k)$. When $k$ is even, we can also construct a graph included in ${\cal H}(d,k)$ from two copies of $K_{d,d+\frac k2}$,
so there are many graphs in ${\cal H}(d,k)$, depending on $k$.

To compute the fractional matching number for graphs in ${\cal H}(d,k)$, 
we use a fractional analogue of the famous Berge--Tutte Formula~\cite{B} for the matching number.  
The {\it deficiency} of a vertex set $S$ in a graph $G$, written $\dfc_G(S)$ or simply $\dfc(S)$,
is $o(G-S)-|S|$, where $o(K)$ is the number of components of odd order of a graph $K$.
The Berge--Tutte Formula is the equality $\alpha'(G)=\min_{S \subseteq V(G)} \FR12(n-\dfc(S))$,
where $n=|V(G)|$.  The
special case $\alpha'(G)=n/2$ reduces to Tutte's $1$-Factor Theorem~\cite{T}: a
graph $G$ has a 1-factor if and only if $o(G-S)\le|S|$ for all
$S\subseteq V(G)$.  (A {\it $1$-factor} of $G$ is a subgraph whose edges form
a perfect matching.)

For the fractional analogue, let $i(K)$ denote the number of isolated vertices
in $K$.  Let $\df(S)=i(G-S)-|S|$ and $\df(G)=\max_{S\subseteq V(G)}\df(S)$.
The fractional analogue of Tutte's $1$-Factor Theorem is that $G$ has a
fractional perfect matching if and only if $i(G-S)\le|S|$ for all $S\subseteq V(G)$
(implicit in Pulleyblank~\cite{P}), and the fractional version of the
Berge--Tutte Formula is $\af(G)=\FR12(n-\df(G))$ (see~\cite{SU}, pages 19--20).

\begin{lem}\label{lem1}
If $H\in {\cal H}(d,k)$, then $\af(H)=\frac{|V(H)|-k}2$.
\end{lem}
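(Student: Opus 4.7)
The plan is to prove the claimed equality by establishing matching upper and lower bounds on $\af(H)$, using the two parts of the bipartite structure described in Construction~\ref{con}. Write $n=|V(H)|$, and note that by double counting edges between $A$ and $B$, condition (i) and (iii) imply that every vertex in $B$ has degree $d|A|/|B|$.

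For the upper bound I would appeal to the fractional Berge--Tutte formula $\af(H)=\frac12(n-\df(H))$ stated just before the lemma. Since $H$ is bipartite with parts $A$ and $B$, removing $B$ leaves every vertex of $A$ isolated, so $i(H-B)=|A|$. Taking $S=B$ in the definition of $\df$ gives
\[
\df(H)\ \ge\ i(H-B)-|B|\ =\ |A|-|B|\ =\ k,
\]
and hence $\af(H)\le \frac{n-k}{2}$.

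For the lower bound I would exhibit an explicit fractional matching. Define $f(e)=|B|/(d|A|)$ for every edge $e\in E(H)$. At any $v\in A$, summing over the $d$ incident edges yields $d\cdot|B|/(d|A|)=|B|/|A|<1$. At any $v\in B$, summing over the $d|A|/|B|$ incident edges yields $(d|A|/|B|)\cdot|B|/(d|A|)=1$. So $f$ is a valid fractional matching, and its total weight is
\[
\sum_{e\in E(H)} f(e)\ =\ |E(H)|\cdot \FR{|B|}{d|A|}\ =\ d|A|\cdot\FR{|B|}{d|A|}\ =\ |B|\ =\ \FR{n-k}{2}.
\]
Combining the two bounds gives $\af(H)=\frac{n-k}{2}$.

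I do not anticipate a main obstacle here: the upper bound is a one-line consequence of the fractional Berge--Tutte formula applied to the obvious vertex set $S=B$, and the lower bound follows from a uniform weight construction whose value is dictated by the tight constraint at vertices of $B$. The only subtlety is noticing that the symmetry in condition (iii) is exactly what makes a constant $f$ simultaneously satisfy the constraints at both sides, so no Hall-type argument is needed.
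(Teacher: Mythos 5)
Your proof is correct. The upper bound is exactly the paper's argument: take $S=B$ in the fractional Berge--Tutte formula and observe $\df(B)=|A|-|B|=k$. Where you diverge is the lower bound. The paper asserts the existence of an (integral) matching of size $|B|$ saturating $B$ and then uses $\af(H)\ge\ap(H)\ge|B|$; that step silently relies on Hall's condition for $B$ into $A$ (which does hold here --- counting edges out of any $S\subseteq B$ gives $|N(S)|\cdot d\ge|S|\cdot d|A|/|B|$, so $|N(S)|>|S|$ --- but the paper does not verify it). You instead exhibit the explicit uniform fractional matching $f\equiv|B|/(d|A|)$, whose feasibility follows immediately from the regularity conditions (i) and (iii) and whose total weight is $|B|$. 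Your route is more self-contained and arguably cleaner for a statement about the \emph{fractional} matching number, since it sidesteps any appeal to K\"onig--Hall theory; the paper's route has the small side benefit of showing the stronger fact $\ap(H)=|B|$ as well. Both are valid.
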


\begin{proof}
Since $\df(B)=|A|-|B|=k$, we have $\af(H) \le \frac{|V(H)|-k}2$. 
By taking a matching with size $|B|$,  
we have $\ap(H) \ge |B|$.
Since $\af(G) \ge \ap(G)$ for any graph $G$ and $|V(H)|=2|B|+k$, we have $$\af(H) \ge \ap(G) \ge |B| = \frac{|V(H)|-k}2,$$
which gives the desired result.
\end{proof}

To determine the spectral radius of $H$ in ${\cal H}(d,k)$, the notion of ``equitable partition" of a vertex set in a graph is used. 
Consider a partition $V(G) = V_1 \cup \cdots \cup V_s$ of the vertex
set of a graph $G$ into $s$ non-empty subsets. For $1 \le  i, j \le s$, let $b_{i,j}$ denote the average number of neighbours in $V_j$ of the vertices in $V_i$. 
The quotient matrix of this partition is the $s \times s$ matrix whose $(i, j)$-th entry equals $b_{i,j}$. The eigenvalues of the quotient matrix interlace
the eigenvalues of $G$. This partition is equitable if for each $1 \le  i, j \le s$, any vertex $v \in V_i$ has exactly $b_{i,j}$ neighbours in $V_j$. 
In this case, the eigenvalues of the quotient matrix are eigenvalues of G and the spectral radius of the quotient matrix equals the spectral radius of
G (see \cite{BH, GR} for more details).

\begin{lem}\label{lem2}
If $H \in {\cal H}(d,k)$, then $\lambda_1(H)=d\sqrt{1+\frac{2k}{|V(H)|-k}}$.
\end{lem}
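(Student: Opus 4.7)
The plan is to exhibit an equitable two-part partition of $V(H)$, compute its quotient matrix, and read off the spectral radius of $H$ from there.

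Set $b = |B|$, so that $|A| = b+k$ and $|V(H)| = 2b+k$. First I would verify that the bipartition $V(H) = A \cup B$ is equitable in the sense described just before the lemma. Since $H$ is bipartite, each vertex in $A$ has $0$ neighbors in $A$ and $d$ neighbors in $B$ (by condition (i)); each vertex in $B$ has $0$ neighbors in $B$ and, by counting edges two ways together with condition (iii), exactly $\frac{d(b+k)}{b}$ neighbors in $A$. (Condition (ii) is what forces this common $B$-degree to equal $d(b+k)/b$, a rational number that must be a positive integer by the existence of $H$.) Hence the partition is equitable, with quotient matrix
\[
Q \;=\; \begin{pmatrix} 0 & d \\[2pt] \FR{d(b+k)}{b} & 0 \end{pmatrix}.
\]

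Next I would invoke the fact quoted in the paragraph preceding the lemma: for an equitable partition, the spectral radius of the quotient matrix equals $\lambda_1(H)$. A direct computation gives
\[
\det(\lambda I - Q) \;=\; \lambda^2 - \FR{d^2(b+k)}{b},
\]
so the eigenvalues of $Q$ are $\pm d\sqrt{(b+k)/b}$, and hence $\lambda_1(H) = d\sqrt{(b+k)/b}$.

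Finally, I would translate this back into the stated form. Since $|V(H)| - k = 2b$, we have
\[
1 + \FR{2k}{|V(H)|-k} \;=\; 1 + \FR{2k}{2b} \;=\; \FR{b+k}{b},
\]
so $\lambda_1(H) = d\sqrt{1 + \frac{2k}{|V(H)|-k}}$, as required. There is no real obstacle here: the only step requiring any care is confirming that the bipartition is genuinely equitable (rather than merely giving correct row sums in the quotient matrix), which is where conditions (i) and (iii) of Construction~\ref{con} are both used; without (iii), the partition would only be ``degree-regular on one side'' and we would be limited to eigenvalue interlacing rather than the equality we need.
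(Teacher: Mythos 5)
Your proof is correct and follows essentially the same route as the paper: both use the equitable bipartition $A\cup B$, compute the quotient matrix $\begin{pmatrix}0 & d\\ d|A|/|B| & 0\end{pmatrix}$, and extract $\lambda_1(H)=d\sqrt{|A|/|B|}=d\sqrt{1+\frac{2k}{|V(H)|-k}}$. Your extra remark justifying why the partition is genuinely equitable (via conditions (i) and (iii)) is a small but welcome elaboration of a step the paper states without detail.
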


\begin{proof}
Let $H$ be a graph with the partition $A$ and $B$ in ${\cal H}(d,k)$.
The quotient matrix of the partition $A$ and $B$ is

$$\begin{pmatrix}
    0 &   d \\
    \frac{d|A|}{|B|}  & 0
\end{pmatrix}.$$

The characteristic polynomial of the matrix is $x^2- d \left(\frac{d|A|}{|B|}\right) = x^2- d(d+\frac{dk}{|B|})=0$.
By the definition of ${\cal H}(d,k)$, the partition $V(H)=A\cup B$ is equitable, thus
we have $$\lambda_1(H)=\sqrt{d(d+\frac{dk}{|B|})}=d\sqrt{1+\frac k{|B|}}=d\sqrt{1+\frac{2k}{|V(H)|-k}},$$
since $|B|=\frac{|V(H)|-k}2$.
\end{proof}

\section {A relationship between $\lambda_1(G)$ and $\af(G)$}
In this section, we give a connection between the spectral radius of a graph with minimum degree $d$
and its fractional matching number. To prove Lemma~\ref{main}, we use the Perron-Frobenius Theorem.

\begin{thm} \label{GR} {\rm (See Theorem 8.8.1 in~\cite{GR})}\label{PF}
If $A$ is a real nonnegative $n\times n$ matrix, and $A_1$ is a real nonnegative $n\times n$ matrix such that $A-A_1$ is nonnegative,
then $\lambda_1(A_1) \le \lambda_1(A)$.
\end{thm}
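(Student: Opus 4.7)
The plan is to reduce the statement to a \emph{subinvariance lemma} for nonnegative matrices: if $M\ge 0$ entrywise and there exists $v\ge 0$, $v\ne 0$, with $Mv\ge \mu v$ entrywise, then $\lambda_1(M)\ge \mu$. Granting this lemma, I pick $v$ to be a Perron eigenvector of $A_1$, so $v\ge 0$, $v\ne 0$, and $A_1v=\lambda_1(A_1)v$. Since $A-A_1\ge 0$ entrywise and $v\ge 0$, I obtain $(A-A_1)v\ge 0$, hence $Av\ge A_1v=\lambda_1(A_1)v$. Applying the subinvariance lemma with $M=A$ and $\mu=\lambda_1(A_1)$ then yields $\lambda_1(A)\ge \lambda_1(A_1)$.

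To establish the subinvariance lemma, the cleanest route is a perturbation to the irreducible case. For $\epsilon>0$, set $M_\epsilon:=M+\epsilon J$, where $J$ is the all-ones matrix, so $M_\epsilon$ is entrywise positive and hence irreducible. By the strong Perron-Frobenius theorem, $M_\epsilon$ admits a strictly positive left eigenvector $w_\epsilon^{T}$ satisfying $w_\epsilon^{T} M_\epsilon = \lambda_1(M_\epsilon)\, w_\epsilon^{T}$. Multiplying $M_\epsilon v \ge M v \ge \mu v$ on the left by $w_\epsilon^{T}$ gives $\lambda_1(M_\epsilon)\, w_\epsilon^{T} v \ge \mu\, w_\epsilon^{T} v$, and the scalar $w_\epsilon^{T} v$ is strictly positive because $w_\epsilon>0$ and $v\ge 0$, $v\ne 0$; cancelling yields $\lambda_1(M_\epsilon)\ge \mu$. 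Since $\lambda_1$ depends continuously on the matrix entries, letting $\epsilon\to 0$ gives $\lambda_1(M)\ge \mu$.

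The main obstacle in this Perron-eigenvector route is precisely that neither $A$ nor $A_1$ is assumed irreducible, so a Perron eigenvector of $A_1$ need not be strictly positive and a left Perron eigenvector of $A$ need not have positive inner product with it; the perturbation $M\mapsto M+\epsilon J$ sidesteps this. An alternative and even more elementary approach is the Gelfand spectral-radius formula $\lambda_1(M)=\lim_{k\to\infty}\|M^k\|^{1/k}$: taking the entrywise $\ell^1$ norm, which is monotone on nonnegative matrices, the hypothesis $0\le A_1\le A$ entrywise together with a straightforward induction gives $A_1^k\le A^k$ entrywise for every $k$, hence $\|A_1^k\|\le\|A^k\|$ and therefore $\lambda_1(A_1)\le\lambda_1(A)$.
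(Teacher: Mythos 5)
Your proof is correct. Note that the paper itself gives no proof of this statement at all: it is quoted as Theorem 8.8.1 of Godsil and Royle and used as a black box, so there is no argument of the author's to compare against. Both of your routes are sound. In the first, the decomposition $Av = A_1v + (A-A_1)v \ge \lambda_1(A_1)v$ applied to a nonnegative eigenvector $v$ of $A_1$, followed by the subinvariance lemma, is the classical argument; your perturbation $M\mapsto M+\epsilon J$ correctly handles the reducible case, and the key scalar $w_\epsilon^{T}v$ is indeed strictly positive since $w_\epsilon>0$ and $v\ge 0$, $v\ne 0$. The one thing you lean on implicitly there is the weak Perron--Frobenius theorem (that a possibly reducible nonnegative matrix $A_1$ has a nonnegative eigenvector for its spectral radius), which is itself usually proved by the same $\epsilon J$ perturbation; this is a standard citable fact, so the argument stands. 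Your second route via Gelfand's formula, $\lambda_1(M)=\lim_k\|M^k\|^{1/k}$ combined with the entrywise monotonicity $0\le A_1^k\le A^k$, is fully elementary and sidesteps Perron--Frobenius entirely; it is arguably the cleanest self-contained proof and proves the statement for the spectral radius of an arbitrary nonnegative matrix, which is exactly the form in which the paper applies it (to quotient matrices of subgraphs, which need not be symmetric).
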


By Theorem~\ref{GR}, if $H$ is a spanning subgraph of a graph $G$, then we have $\lambda_1(H) \le \lambda_1(G)$.
We know that the eigenvalues of $K_n$, the complete graph on $n$ vertices, are $n-1$ (with multiplicity 1) and -1 (with multiplicity $n-1$), and the eigenvalues of $P_n$, the path on $n$ vertices, are $2\cos \frac{i\pi}{n+1}$ for $i \in [n]$. Note that for some $i \in [n]-\{1\}$, the $i$-th largest eigenvalue of $P_n$ is bigger than the one of $K_n$.
Thus, even if $H$ is a spanning subgraph of a graph $G$, we cannot guarantee that $i$-th largest eigenvalue of $H$ is at most $i$-th largest eigenvalue of $G$ for all $i \in [n]$, except for $i=1$. However, if $H$ is a (vertex) induced subgraph of a graph $G$,
then for all $i \in [n]$, we have $\lambda_i(H) \le \lambda_i(G)$ by the Interlacing Theorem.

Now, we are ready to prove the main result. By the Interlacing Theorem and Theorem~\ref{GR}, we obtain an upper bound on the spectral radius
in an $n$-vertex graph $G$ with minimum degree $d$ in order to guarantee that $\ap_*(G)  > \frac{n-k}2$.

\begin{lem}\label{main}
Let $G$ be an $n$-vertex connected graph with minimum degree $d$, and
let $k$ be a real number between 0 and $n$.
If $\lambda_1(G) < d \sqrt{1+\frac{2k}{n-k}}$, then
$\af(G) > \frac {n-k}2$.
\end{lem}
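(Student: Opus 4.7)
The plan is to prove the contrapositive: assuming $\af(G)\le\frac{n-k}2$, I will derive $\lambda_1(G)\ge d\sqrt{1+\frac{2k}{n-k}}$. The fractional Berge--Tutte formula $\af(G)=\frac12(n-\df(G))$ turns the hypothesis into $\df(G)\ge k$, so there is a set $S\subseteq V(G)$ with $i(G-S)\ge|S|+k$. Let $I$ be the set of isolated vertices of $G-S$ and write $t=|I|$, $s=|S|$; then $t\ge s+k$ and $t+s\le n$. Since every vertex of $I$ lies outside $S$ and has no neighbor in $V(G)\setminus S$, all of its (at least $d$) neighbors lie in $S$, so the bipartite subgraph $F$ of $G$ consisting of every edge joining $I$ to $S$ satisfies $|E(F)|=\sum_{v\in I}d_G(v)\ge dt$.

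Next I would lower-bound $\lambda_1(F)$. Padding $A(F)$ with zero rows and columns for vertices outside $I\cup S$ gives a nonnegative matrix dominated entry-wise by $A(G)$, so Theorem~\ref{PF} yields $\lambda_1(F)\le \lambda_1(G)$. For a lower bound on $\lambda_1(F)$ I would use the Rayleigh quotient with the test vector $x$ assigning $\sqrt s$ to every vertex of $I$ and $\sqrt t$ to every vertex of $S$. Using bipartiteness, $x^{\top}A(F)x=2\sqrt{st}\,|E(F)|$ and $x^{\top}x=2st$, which gives
\[
\lambda_1(F)\;\ge\;\frac{|E(F)|}{\sqrt{st}}\;\ge\;\frac{dt}{\sqrt{st}}\;=\;d\sqrt{t/s}.
\]

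Finally, I need the numerical bound $t/s\ge 1+\frac{2k}{n-k}$. Combining $t-s\ge k$ with $t+s\le n$ yields $2s\le n-k$, so $s\le(n-k)/2$, and then $t/s=1+(t-s)/s\ge 1+k/s\ge 1+\frac{2k}{n-k}$. Chaining gives $\lambda_1(G)\ge\lambda_1(F)\ge d\sqrt{1+\frac{2k}{n-k}}$, contradicting the hypothesis. The subtlety I expect to check carefully is that $s>0$, so that $t/s$ is meaningful: this holds because $G$ is connected with $n\ge 2$, which forces $i(G)=0$, so $S=\nul$ would give $t=0$ and contradict $t\ge s+k=k>0$. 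A secondary check is that the edge count $|E(F)|\ge dt$ really uses $I\cap S=\nul$ (so edges are not doubly counted), which is automatic from $I\subseteq V(G)\setminus S$.
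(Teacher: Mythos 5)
Your proof is correct and follows essentially the same route as the paper: contrapositive via the fractional Berge--Tutte formula, the bipartite subgraph between $S$ and the isolated vertices of $G-S$, the bound $\lambda_1(F)\ge |E(F)|/\sqrt{st}\ge d\sqrt{t/s}$, and the same arithmetic from $t-s\ge k$ and $t+s\le n$. The only difference is cosmetic: you obtain $\lambda_1(F)\ge |E(F)|/\sqrt{st}$ by a Rayleigh-quotient test vector, whereas the paper reads off the same quantity as the spectral radius of the $2\times 2$ quotient matrix of the partition $(S,T)$ and invokes interlacing; your explicit handling of $s>0$ and of the padding needed to compare $\lambda_1(F)$ with $\lambda_1(G)$ is a welcome bit of extra care.
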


\begin{proof}
Assume to the contrary that $\af \le \frac{n-k}2$.
By the fractional Berge-Tutte formula, there exists a vertex subset $S$ in $V(G)$ such that
$\af(G)=\frac 12 (n-\df(S))$, which implies $\df(S) \ge \lceil k \rceil$. Let $i(G-S)=t$, and
let $T=\{v_1,\ldots,v_t\}$ be the set of the isolated vertices in $G-S$. Note that $t \ge s+\lceil k \rceil$, where $|S|=s$.
Consider the bipartite subgraph $H$ with partition $S$ and $T$ such that $E(H)$ is the set of edges of $G$ having one end-vertex in $S$ 
and the other in $T$.  Let $a=|E(H)|$.
Since every vertex in $T$ has at least degree $d$ in $H$, the number of edges between $S$ and $T$ is at least $td$, which means that $ a \ge td$.
The quotient matrix of the partition $S$ and $T$ in $H$ is

$$\begin{pmatrix}
    0 & \frac as \\
    \frac {a}{t}  & 0
\end{pmatrix}.$$

The characteristic polynomial of the matrix is $x^2-\frac as (\frac at) = 0$. Thus by Theorem~\ref{PF} and the Interlacing Theorem, we have

$$\lambda_1(G) \ge \lambda_1(H) \ge a\sqrt{\frac 1{st}} \ge dt \sqrt{\frac1{st}} = d \sqrt{\frac ts}$$$$ \ge d\sqrt{\frac {s+\lceil k \rceil}{s}} 
= d\sqrt{1+\frac {\lceil k \rceil}s} \ge d \sqrt{1+\frac {2\lceil k \rceil}{n-k}} \ge d \sqrt{1+\frac {2 k }{n-k}}$$

since $ a\ge dt$, $t \ge s+\lceil k \rceil$, $n \ge t+s \ge 2s+k$, and $s \ge d$.
\end{proof}

Lemma~\ref{main} is best possible by Lemma~\ref{lem2} and is used to prove Theorem~\ref{main2}.

\begin{thm}\label{main2}
If $G$ is an $n$-vertex graph with minimum degree $d$, then we have 
$$\af(G) \ge \frac{nd^2}{\lambda_1(G)^2 + d^2},$$
with equality if and only if $k=\frac{n(\lambda_1(G)^2 -d^2)}{\lambda_1(G)^2+d^2}$ is an integer and $G$ is an element of ${\cal H}(d,k)$.
\end{thm}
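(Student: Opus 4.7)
My plan is to reduce the bound to Lemma~\ref{main} by plugging in the value of $k$ that makes the two sides of the inequality match exactly. Set
$$k_0 = \FR{n(\lambda_1(G)^2 - d^2)}{\lambda_1(G)^2 + d^2}.$$
Since $\lambda_1(G)$ is at least the average degree of $G$ and hence at least $d$, we have $k_0 \in [0, n)$. A short algebraic check gives $\FR{n - k_0}{2} = \FR{nd^2}{\lambda_1(G)^2 + d^2}$ and $d\sqrt{1 + \FR{2k_0}{n - k_0}} = \lambda_1(G)$, so the desired bound is equivalent to the inequality $\af(G) \ge \FR{n - k_0}{2}$.

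To establish this inequality, I would argue by contradiction. Assume $\af(G) < \FR{n - k_0}{2}$. By the fractional Berge--Tutte Formula, $\af(G) = \FR{n - m}{2}$ for some nonnegative integer $m = \df(G)$, and the assumption forces $m > k_0$. Because $x \mapsto d\sqrt{1 + \FR{2x}{n - x}}$ is strictly increasing on $[0, n)$ and takes the value $\lambda_1(G)$ at $x = k_0$, applying the contrapositive of Lemma~\ref{main} with parameter $m$ yields $\lambda_1(G) \ge d\sqrt{1 + \FR{2m}{n - m}} > \lambda_1(G)$, a contradiction. The ``if'' direction of the equality statement is then immediate from Lemmas~\ref{lem1} and~\ref{lem2}: when $k_0$ is an integer and $G \in \cH(d, k_0)$, those lemmas give $\af(G) = \FR{n - k_0}{2} = \FR{nd^2}{\lambda_1(G)^2 + d^2}$.

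The main obstacle is the ``only if'' direction, for which I would revisit each step of the proof of Lemma~\ref{main} demanding equality throughout. If $\af(G) = \FR{n - k_0}{2}$, then the identity $\af(G) = \FR{n - \df(G)}{2}$ forces $k_0 = \df(G)$ to be a nonnegative integer. Choose $S \subseteq V(G)$ attaining $\df(S) = k_0$, write $s = |S|$ and let $T$ be the set of $t = s + k_0$ isolated vertices of $G - S$, let $H$ be the bipartite graph on $(S, T)$ carrying all $S$-$T$ edges of $G$, and put $a = |E(H)|$. Equality in $a \ge td$ forces every vertex of $T$ to have degree exactly $d$ in $G$, and equality in the chain $n \ge t + s \ge 2s + k_0$ (now with $\lceil k_0 \rceil = k_0$) forces $s = \FR{n - k_0}{2}$. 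The delicate step is equality in $\lambda_1(G) \ge \lambda_1(H) \ge a/\sqrt{st}$: on the right, equality in the quotient-matrix interlacing bound forces the $(S, T)$ partition of $H$ to be equitable, so $H$ is biregular; on the left, combined with the connectedness of $G$ and the strict Perron--Frobenius monotonicity for irreducible nonnegative matrices, it forces $V(G) = S \cup T$ and $H = G$, so $G$ is bipartite with parts $(S, T)$. Setting $A = T$ and $B = S$ then yields all defining properties of $\cH(d, k_0)$: $G$ is connected bipartite, every vertex of $A$ has degree $d$, $|A| = |B| + k_0$, and all vertices of $B$ share the common degree $td/s$.
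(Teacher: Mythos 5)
Your proof is correct and follows the paper's overall strategy of specializing Lemma~\ref{main} at the critical value $k_0=\frac{n(\lambda_1(G)^2-d^2)}{\lambda_1(G)^2+d^2}$, but it differs in two worthwhile ways. For the inequality itself, the paper lets $k$ decrease toward $k_0$ and passes to the limit, whereas you exploit the integrality of $\df(G)$ supplied by the fractional Berge--Tutte formula to run a direct contradiction with the strictly larger parameter $m=\df(G)>k_0$; this avoids the limiting argument entirely and is arguably cleaner. For the ``only if'' direction your treatment is more careful than the paper's: the paper merely asserts equality in each step of the chain in Lemma~\ref{main} and lists the condition $s=d$, which is not actually forced by any inequality in that chain (and is violated by the graphs $G_k$ of Example~\ref{exam}, which do lie in ${\cal H}(d,k)$); you instead correctly extract biregularity of $H$ from tightness of the quotient-matrix bound, get $V(G)=S\cup T$ from equality in $n\ge t+s$, and deduce $H=G$ from connectedness together with strict Perron--Frobenius monotonicity, which is exactly what is needed to conclude $G\in{\cal H}(d,k_0)$. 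The one place you are terse is the claim that $\lambda_1(H)=a/\sqrt{st}$ forces the partition of $H$ to be equitable; this deserves a line (the test vector equal to $1/\sqrt{s}$ on $S$ and $1/\sqrt{t}$ on $T$ attains Rayleigh quotient $a/\sqrt{st}$, so at equality it is a $\lambda_1$-eigenvector of $H$, and the eigenvalue equation then gives constant degrees on each side), but the assertion is standard and correct.
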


\begin{proof}
For convenience, we denote $\lambda_1(G)$ and $\af(G)$ by $\L$ and $\af$.
Lemma~\ref{main} says that if $\L <  d \sqrt{1+\frac{2k}{n-k}}$, then $\af > \frac{n-k}2$.
Since $\frac x{n-x}$ is an increasing function of $x$ on $[0,n)$, $d \sqrt{1+\frac{2k}{n-k}}$ decreases toward $\L$ as $k$ decreases toward $z$,
where $z=\frac{n(\L^2-d^2)}{\L^2+d^2}$. Therefore for each value of $k \in (z,n)$, we have $\af > \frac{n-k}2$ by Lemma~\ref{main}.
Letting $k$ tend to $z$ and finally equal to $z$, we get $\af \ge \frac{nd^2}{\L^2+d^2}$, as claimed.

If $k=\frac{n(\L^2-d^2)}{\L^2+d^2}$  is an integer and $G \in {\cal H}(d,k)$, then by Lemma~\ref{lem1},
$\af= \frac{n-k}2=\frac{nd^2}{\L^2 + d^2}$.

For the `only if' part, assume that $\af= \frac{nd^2}{\L^2 + d^2}$. 
Equality in the bound requires equality in each step of the computation: we have $k=z$ and equalities in each inequality of the chain of inequalities 
at the end of the proof of Lemma~\ref{main}. Since $\lceil k \rceil = k$, $k$ must be an integer. Furthermore, since $a=dt$, $t=s+k$, $n=2s+k$, and $s=d$ in the proof of Lemma~\ref{main} , $G$ must be included in ${\cal H}(d,k)$.

\end{proof}

\bigskip
\centerline{\large{\bf Acknowledgement}}
The author is grateful to the referees for their valuable comments, corrections and suggestions,
which led to a great improvement of this paper.

\end{document}